\newtheorem{theorem}{Theorem}%
\newtheorem{proposition}[theorem]{Proposition}
\newtheorem{corollary}[theorem]{Corollary}
\theoremstyle{definition}
\newtheorem{definition}[theorem]{Definition}
\theoremstyle{remark}
\newtheorem{remark}[theorem]{Remark}
\newcommand{\ad}{{\rm ad}}
\newcommand{\Ad}{{\rm Ad}}
\begin{document}

\title[Banach Poisson--Lie group structure on
$\operatorname{U}(\mathcal{H})$]{Banach Poisson--Lie group structure
  on $\operatorname{U}(\mathcal{H})$}

\author[A.B.~Tumpach]{Alice Barbora Tumpach}

\address{UMR CNRS 8524\\
  UFR de Math\'ematiques\\
  Laboratoire Paul Painlev\'e\\
  59 655 Villeneuve d'Ascq Cedex\\
  France\\ \& \\ Institut CNRS Pauli\\ UMI CNRS 2842\\
  Oskar-Morgenstern-Platz 1 \\1090 Wien\\Austria}

\email{alice-barbora.tumpach@univ-lille.fr}

\author[T.~Goli\'nski]{Tomasz Goli\'nski}

\address{University of Bia\l ystok\\ Cio\l kowskiego 1M\\15-245 Bia\l
  ystok\\ Poland}

\email{tomaszg@math.uwb.edu.pl}

\thanks{This research was partially supported by joint National
  Science Centre, Poland (number 2020/01/Y/ST1/00123) and Fonds zur
  Förderung der wissenschaftlichen Forschung, Austria (number I
  5015-N) grant ``Banach Poisson--Lie groups and integrable
  systems''.}

\begin{abstract}
  We construct a Banach Poisson--Lie group structure on the unitary
  group of a separable complex Hilbert space.
\end{abstract}

\subjclass{58B25,22E65,46T05,53D17}

\keywords{Banach Poisson--Lie groups, Poisson manifolds, Banach
  Lie--Poisson spaces, unitary group}

\maketitle

\section{Introduction}

\paragraph{Notation} In this paper we consider the Banach Lie group of
bounded unitary operators $\operatorname{U}(\mathcal{H})$ on a complex
separable Hilbert space $\mathcal{H}$. We denote by
$L_{\infty}(\mathcal{H})$ the Banach space of bounded linear operators
on $\mathcal{H}$, and by $L_1(\mathcal{H})$ the Banach algebra of
trace class operators on $\mathcal{H}$. The Banach Lie algebra of
$\operatorname{U}(\mathcal{H})$ consisting of skew-hermitian bounded
operators will be denoted by $\mathfrak{u}(\mathcal{H})$ and the
Banach Lie algebra of trace-class skew-hermitian operators by
$\mathfrak{u}_1(\mathcal{H})$.

In the whole paper for a Banach space $\mathfrak{b}$ we will use the
notation $\mathfrak{b}^*$ to denote the continuous dual of
$\mathfrak{b}$, i.e. the Banach space of continuous functionals on
$\mathfrak{b}$, and $\mathfrak{b}_*$ for a predual of $\mathfrak{b}$,
i.e. for a Banach space such that
$(\mathfrak{b}_*)^* \cong \mathfrak{b}$.

All manifolds in the paper are assumed to be of $C^\infty$ class and all considered objects (e.g. sections, functions) are smooth.

\paragraph{Aim of the paper} The aim of this paper is to define a
structure of Banach Poisson--Lie group on
$\operatorname{U}(\mathcal{H})$ defined on the pre-cotangent bundle
$T_*\operatorname{U}(\mathcal{H})$, with fibers modeled on the Banach
quotient space $L_1(\mathcal{H})/\mathfrak{u}_1(\mathcal{H})$. Notably
$L_1(\mathcal{H})/\mathfrak{u}_1(\mathcal{H})$ inherits a Lie algebra
structure from this construction.

\paragraph{Related work} The notion of Poisson manifold in the context
of Banach manifolds was introduced in \cite{OR} and generalized in
various directions in
\cite{pelletier,pelletier19,neeb14,debievre15,BGT,
  beltita-odzijewicz,tumpach-bruhat}. The notion of Poisson--Lie group
in the finite-dimensional setting goes back to
\cite{drinfeld83,semenov87,kosmann88,lu-weinstein90}. The notion of
Banach Poisson--Lie group was introduced in \cite{tumpach-bruhat} and
examples related to the Korteweg--de Vries hierarchy and restricted
Grassmannian \cite{Ratiu-grass,GO-grass} were investigated. Some
other, more formal approaches to infinite dimensional Poisson--Lie
groups can be found e.g. in
\cite{grabowski94,zakharevich94,khesin95}. The geometry of the unitary
groups was studied e.g. in \cite{grabowski05,andruchow10,beltita21}.

\section{Definition of Banach Poisson--Lie groups}

We recall in this section the generalization of the definition of
Banach Poisson manifolds adapted to our considerations. The definition
given below was introduced in \cite{tumpach-bruhat} and called
generalized Banach Poisson manifolds. In order to be coherent with the
terminology used in \cite{pelletier}, we will call this structure
Banach sub-Poisson manifold. This notion is a generalization of the
notion of Banach Poisson manifolds given in \cite{OR} to the case
where the Poisson tensor is only defined on a subset of the cotangent
bundle (Definition~\ref{Poisson_tensor}). This subset will be a bundle
with possibly different topology and large enough that it is in
duality with the tangent bundle (Definition~\ref{duality_bundle}). The
definition of Banach Poisson--Lie groups in this context is given in
Definition~\ref{BPLG}. In the finite-dimensional case, all these
definitions become the usual ones.

\begin{definition}\label{duality_bundle}
  Let $M$ be a Banach manifold. We will say that a Banach bundle
  $\mathbb{F}$ over $M$ is \textbf{in duality} with the tangent bundle
  to $M$ if, for every $p\in M$, there is a duality pairing
  (i.e. non-degenerate continuous bilinear map) between the fibers
  $\mathbb{F}_p$ and $T_pM$, which depends smoothly on $p$.
\end{definition}

\begin{remark}
  Any Banach bundle $\mathbb{F}$ over $M$ in duality with $TM$ injects
  continuously into $T^*M$, hence we will identify it sometimes with a
  subset of $T^*M$. In this way local sections of $\mathbb{F}$ will be regarded as local 1-forms on $M$. Such a bundle $\mathbb{F}$ will play the role of
  \textbf{co-characteristic distribution} in the sense of
  \cite{beltita-odzijewicz}. However in general it may not be a Banach
  subbundle of $T^*M$.
\end{remark}

We will denote by $\Lambda^2\mathbb{F}^{*}$ the vector bundle over $M$
whose fiber over $p\in M$ is the Banach space of continuous
skew-symmetric bilinear forms on the fiber $\mathbb{F}_p$. %

\begin{definition}\label{Poisson_tensor}
  Let $M$ be a Banach manifold and $\mathbb{F}$ a bundle in duality
  with $TM$. A smooth section $\pi$ of $\Lambda^2\mathbb{F}^*$ is
  called a \textbf{Poisson tensor} on $M$ with respect to $\mathbb{F}$
  if~:
  \begin{enumerate}
  \item for any closed local sections $\alpha$, $\beta$ of
    $\mathbb{F}$, the differential $d\left(\pi(\alpha, \beta)\right)$
    is a local section of $\mathbb{F}$;
  \item (Jacobi) for any closed local sections $\alpha$, $\beta$,
    $\gamma$ of $\mathbb{F}$,
    \begin{equation}\label{Jacobi_Poisson}
      \pi\left(\alpha, d\left(\pi(\beta, \gamma)\right)\right) + \pi\left(\beta, d\left(\pi(\gamma, \alpha)\right)\right) + \pi\left(\gamma, d\left(\pi(\alpha, \beta)\right) \right)= 0.
    \end{equation}
  \end{enumerate}
  The triple $(M, \mathbb{F}, \pi)$ will be called a \textbf{Banach
    sub-Poisson manifold}.
\end{definition}

\begin{remark}
  Given a Poisson tensor on a Banach manifold $M$, one can define a
  Poisson bracket on the space of locally defined functions with
  differentials in $\mathbb{F}$ by
  \begin{equation*}
    \{ f, g\} = \pi(df, dg).
  \end{equation*}
  Condition~1 in Definition~\ref{Poisson_tensor} ensures that the
  bracket of two such functions is again a function of the same type,
  and condition 2 is equivalent to the usual Jacobi
  identity. Consequently, the space of smooth functions on $M$ with
  differentials in $\mathbb{F}$ forms a Poisson algebra. Note that the
  existence of Hamiltonian vector fields is not generally assumed.
\end{remark}

\begin{remark}
  The notion of Banach sub-Poisson manifold is adapted to the
  infinite-dimensional context where~:
  \begin{enumerate}
  \item the tangent space of a Banach manifold may be in duality with
    many different Banach spaces. All these Banach spaces can be
    identified with subspaces of the cotangent space~;
  \item a Banach manifold $M$ may not have partition of unity or bump
    functions, hence it may not be possible to extend locally defined
    objects to global ones. This explains why we consider local
    sections instead of smooth functions on $M$ in order to define a
    Poisson structure on $M$.
  \end{enumerate}
\end{remark}

\begin{definition}\label{poisson_map_def}
  Let $(M_1, \mathbb{F}_1, \pi_1)$ and $(M_2, \mathbb{F}_2, \pi_2)$ be
  Banach sub-Poisson manifolds and $F:M_1\rightarrow M_2$ a smooth
  map. One says that $F$ is a \textbf{Poisson map} at $p\in M_1$ if
  \begin{enumerate}
  \item the tangent map $T_pF:T_pM_1 \rightarrow T_{F(p)}M_2$
    satisfies $T_pF^*(\mathbb{F}_2)_{F(p)}\subset (\mathbb{F}_1)_p$
    and $T_pF^*: (\mathbb{F}_2)_{F_2(p)} \rightarrow (\mathbb{F}_1)_p$ is
    continuous~;
  \item
    $(\pi_1)_p\left(T_pF^*(\alpha), T_pF^*(\beta)\right) =
    (\pi_2)_{F(p)}\left(\alpha, \beta\right)$ for any
    $\alpha, \beta \in (\mathbb{F}_2)_{F(p)}$.
  \end{enumerate}
  One says that $F$ is a Poisson map if it is a Poisson map at any
  $p\in M_1$.
\end{definition}

\begin{proposition}
  Let $(M_1, \mathbb{F}_1, \pi_1)$ and $(M_2, \mathbb{F}_2, \pi_2)$ be
  Banach sub-Poisson manifolds. Then the product $M_1\times M_2$
  carries a natural Banach sub-Poisson manifold structure
  $\left(M_1\times M_2, \mathbb{F}, \pi \right)$ where
  \begin{enumerate}
  \item $M_1\times M_2$ carries the product Banach manifold structure,
    in particular $T(M_1\times M_2)\simeq TM_1\oplus TM_2$ and
    $T^*(M_1\times M_2)\simeq T^*M_1\oplus T^*M_2$,
  \item $\mathbb{F}$ is the subbundle of $T^*M_1\oplus T^*M_2$ defined
    as
    \begin{equation*}
      \mathbb{F}_{(p, q)} = (\mathbb{F}_1)_p\oplus(\mathbb{F}_2)_q,
    \end{equation*}
  \item $\pi$ is defined on $\mathbb{F}$ by
    \begin{equation*}
      \pi(\alpha_1+\alpha_2, \beta_1 + \beta_2) = \pi_1(\alpha_1, \beta_1) +
      \pi_2(\alpha_2, \beta_2), \quad \alpha_1, \beta_1 \in \mathbb{F}_1,
      \alpha_2, \beta_2 \in \mathbb{F}_2.
    \end{equation*}
  \end{enumerate}
\end{proposition}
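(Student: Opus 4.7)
The plan is to verify the three ingredients entering Definition~\ref{Poisson_tensor}: that $\mathbb{F}$ is in duality with $T(M_1\times M_2)$, that $\pi$ is a well-defined smooth section of $\Lambda^2\mathbb{F}^*$, and that conditions~1 and~2 of that definition hold for $\pi$. Using $T_{(p,q)}(M_1\times M_2)\simeq T_pM_1\oplus T_qM_2$ and the analogous splitting of $\mathbb{F}_{(p,q)}$, the duality pairing and the smoothness of $\pi$ follow immediately by taking direct sums of the two given pairings and of the two Poisson tensors pulled back via the projections; non-degeneracy of a direct-sum pairing is inherited fibrewise from non-degeneracy of the summands.

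The core of the proof is condition~1. Given closed local sections $\alpha=\alpha_1+\alpha_2$ and $\beta=\beta_1+\beta_2$ of $\mathbb{F}$ around $(p_0,q_0)$, I would apply the Poincar\'e lemma on a product of convex neighborhoods to write $\alpha=df$, $\beta=dg$ with $f,g$ smooth and $d_1f,d_1g\in\mathbb{F}_1$, $d_2f,d_2g\in\mathbb{F}_2$ pointwise. The construction then gives
\begin{equation*}
  \pi(df,dg)(p,q) = (\pi_1)_p(d_1f,d_1g) + (\pi_2)_q(d_2f,d_2g),
\end{equation*}
and I would show that the differential of each summand lies in $\mathbb{F}$, splitting it into its $T^*M_1$- and $T^*M_2$-parts. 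For fixed $q$, the first summand is the $M_1$-Poisson bracket of the slice functions $f(\cdot,q),g(\cdot,q)$, whose differentials are closed local sections of $\mathbb{F}_1$ on $M_1$; condition~1 for $\pi_1$ then yields that its $T^*M_1$-part lies in $\mathbb{F}_1$, and symmetrically for the $T^*M_2$-part of the second summand. The mixed pieces, namely the $T^*M_2$-part of $d[\pi_1(d_1f,d_1g)]$ and the $T^*M_1$-part of $d[\pi_2(d_2f,d_2g)]$, are handled by combining Schwarz' theorem (which identifies $\partial_v d_1 f(p,q)\cdot X$ with $\partial_X d_2 f(p,q)\cdot v$ for $X\in T_pM_1$ and $v\in T_qM_2$) with the pointwise fibre constraints $d_2f,d_2g\in\mathbb{F}_2$ (respectively $d_1f,d_1g\in\mathbb{F}_1$) to recognize the resulting 1-forms as taking values in the appropriate sub-bundle.

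For condition~2, I would expand the cyclic Jacobi sum using the direct-sum formula for $\pi$. Since $(\pi_1)_p$ depends only on $p$ and $(\pi_2)_q$ only on $q$, the cross contributions involving derivatives of $\pi_2$ in the $M_1$-direction and of $\pi_1$ in the $M_2$-direction cancel in pairs (in finite dimensions this is the vanishing of the Schouten bracket $[\pi_1,\pi_2]=0$). What remains is precisely the slice-wise Jacobi identity for $\pi_1$ applied to $\alpha_1,\beta_1,\gamma_1$ on $M_1\times\{q_0\}$ together with the analogous Jacobi identity for $\pi_2$ on $\{p_0\}\times M_2$; both vanish by the Poisson hypothesis on the factors.

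The main technical obstacle is the mixed-derivative step inside condition~1, namely showing that the $M_2$-differential of $\pi_1(d_1f,d_1g)$ lies in $\mathbb{F}_2$ (and the symmetric statement). Everything else assembles in a routine way from the sub-Poisson structures on $M_1$ and $M_2$.
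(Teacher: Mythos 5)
The paper states this proposition without proof, so there is no argument of the authors to compare yours against; the construction is treated as routine. Your overall strategy --- splitting $\mathbb{F}$ and $\pi$ fibrewise, reducing condition~1 of Definition~\ref{Poisson_tensor} to slice functions on each factor, and reducing the Jacobi identity to the two factors plus a cancellation of cross terms --- is the natural one, and the duality, smoothness, and slice-wise parts of your outline are fine.

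However, the step you yourself single out as ``the main technical obstacle'' is a genuine gap, not a technicality that Schwarz's theorem disposes of. Writing $h_1(p,q)=(\pi_1)_p\bigl(d_1f(p,q),d_1g(p,q)\bigr)$, its $M_2$-differential is $d_2h_1\cdot v=(\pi_1)_p(\partial_v d_1f,\,d_1g)+(\pi_1)_p(d_1f,\,\partial_v d_1g)$, and you must show that this functional of $v\in T_qM_2$ lies in $(\mathbb{F}_2)_q$, which in this setting is typically a proper subspace of $T_q^*M_2$ carrying its own, finer topology. Schwarz's theorem converts $\langle\partial_v d_1f,X\rangle$ into $\langle\partial_X d_2f,v\rangle$ (whence an element of $(\mathbb{F}_2)_q$, since $d_2f$ is a smooth $\mathbb{F}_2$-valued map of $p$) only when the derivative $\partial_v d_1f$ is paired against a tangent vector $X\in T_pM_1$. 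To apply this to $(\pi_1)_p(\partial_v d_1f,\,d_1g)$ you need the functional $(\pi_1)_p(\,\cdot\,,d_1g)\in(\mathbb{F}_1)_p^*$ to be represented by an element of $T_pM_1$, i.e.\ you need a Hamiltonian vector field --- precisely what the paper's definition does not assume (see the remark following Definition~\ref{Poisson_tensor}); and since $(\mathbb{F}_2)_q$ need not be closed in $T_q^*M_2$, you cannot rescue the step by approximating that functional by tangent vectors either. So as written the mixed-derivative argument does not close. The same issue resurfaces in your treatment of condition~2, where the cancellation of the cross terms of the form $\pi_1(\alpha_1,d_1[\pi_2(\beta_2,\gamma_2)])$ and $\pi_2(\alpha_2,d_2[\pi_1(\beta_1,\gamma_1)])$ in the cyclic sum again rests on the same mixed-derivative identities. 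Note that in the only place the paper actually uses the product structure, namely $\operatorname{U}(\mathcal{H})\times\operatorname{U}(\mathcal{H})$ in Theorem~\ref{UresPoisson}, the sharp map is observed to take values in the tangent bundle, so there your argument can be completed; for the proposition in its stated generality you should either supply the missing argument or make the hypothesis $\pi_i(\,\cdot\,,\beta)\in T_pM_i$ for $\beta\in(\mathbb{F}_i)_p$ explicit.
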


\begin{definition}\label{BPLG}
  A \textbf{Banach Poisson--Lie group} is a Banach Lie group $G$
  equipped with a Banach sub-Poisson manifold structure
  $(G, \mathbb{F}, \pi)$ such that the group multiplication
  $m: G \times G \rightarrow G$ is a Poisson map, where $G \times G$
  is endowed with the product sub-Poisson structure.
\end{definition}

\begin{remark}\label{remdual}
  Let $(G, \mathbb{F}, \pi)$ be a Banach Poisson--Lie group with
  Banach Lie algebra $\mathfrak{g}$ and unit element $e\in G$.
  According to Proposition~5.6 in \cite{tumpach-bruhat}, the
  compatibility condition between the Poisson tensor $\pi$ and the
  multiplication in $G$ implies that $G$ acts continuously on the
  fiber $\mathbb{F}_e \subset \mathfrak{g}^*$ by coadjoint action. By
  derivation, it follows that $\mathfrak g$ acts also continuously on
  $\mathbb{F}_e$ by coadjoint action.
\end{remark}

\section{Some subspaces of $\mathfrak{u}^*(\mathcal{H})$ in duality
  with $\mathfrak{u}(\mathcal{H})$}

In order to define a Banach Poisson--Lie group structure on the Banach
Lie group $\operatorname{U}(\mathcal{H})$, we are looking for
subspaces of the dual space $\mathfrak{u}^*(\mathcal{H})$ in duality
with $\mathfrak{u}(\mathcal{H})$, on which
$\operatorname{U}(\mathcal{H})$ acts continuously by coadjoint action
(see Remark~\ref{remdual}).

Endow the Hilbert space $\mathcal{H}$ with a Hilbert basis
$\{|n\rangle\}_{n\in\mathbb{Z}}$. We will
consider the following Banach Lie algebra of upper triangular
trace-class operators~:
\begin{equation*}
  \mathfrak{b}^+_1(\mathcal{H}) := \{\alpha\in
  \operatorname{L}_1(\mathcal{H}): \alpha |n\rangle \in~
  \textrm{span}\{|m\rangle, m\geq n\}~\textrm{and}~\langle
  n|\alpha|n\rangle\in\mathbb{R}, \textrm{for}~ n\in\mathbb{Z}\}.
\end{equation*}
This section is organized as follows. In subsection~\ref{bduality}, we
show that there is a duality pairing between
$\mathfrak{b}^+_1(\mathcal{H})$ and $\mathfrak{u}(\mathcal{H})$ and
prove that $\mathfrak{u}(\mathcal{H})$ does not act continuously on
$\mathfrak{b}^+_1(\mathcal{H})$ by coadjoint action, hence
$\mathfrak{b}^+_1(\mathcal{H})$ cannot be used to define a
Poisson--Lie group structure on $\operatorname{U}(\mathcal{H})$. In
subsection~\ref{stable}, we construct a subspace of
$\mathfrak{u}^*(\mathcal{H})$ into which
$\mathfrak{b}^+_1(\mathcal{H})$ injects as a dense subspace and on
which $\mathfrak{u}(\mathcal{H})$ acts continuously.

\subsection{Duality pairing between $\mathfrak{u}(\mathcal{H})$ and
  $\mathfrak{b}^+_1(\mathcal{H})$}\label{bduality}

\begin{proposition}\label{dualityp}
  The continuous bilinear map between $\mathfrak{u}(\mathcal{H})$ and
  $\mathfrak{b}_{1}^+(\mathcal{H})$ given by the imaginary part of the
  trace
  \begin{equation*}
    \begin{aligned}
      \mathfrak{u}(\mathcal{H}) \times \mathfrak{b}_{1}^+(\mathcal{H})& \rightarrow && \mathbb{R}\\
      (A, B) & \mapsto && \Im\Tr AB
    \end{aligned}
  \end{equation*}
  is non-degenerate, hence it defines a duality pairing between
  $\mathfrak{u}(\mathcal{H})$ and $\mathfrak{b}_{1}^+(\mathcal{H})$.
\end{proposition}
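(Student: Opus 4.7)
The plan is to verify continuity quickly and then attack non-degeneracy by testing the pairing against finite-rank operators built from the Hilbert basis $\{|n\rangle\}_{n\in\mathbb{Z}}$, extracting the matrix entries $\langle m|A|n\rangle$ or $\langle m|B|n\rangle$ one by one.

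For continuity, since $A\in \mathfrak{u}(\mathcal{H})\subset L_\infty(\mathcal{H})$ and $B\in\mathfrak{b}_1^+(\mathcal{H})\subset L_1(\mathcal{H})$, the operator $AB$ is trace-class with $|\Tr AB|\leq \|A\|_\infty\|B\|_1$, so $|\Im\Tr AB|$ is controlled by $\|A\|_\infty\|B\|_1$.

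For non-degeneracy in the $A$-variable, assume $\Im\Tr(AB)=0$ for every $B\in\mathfrak{b}_1^+(\mathcal{H})$. The rank-one operator $B=|m\rangle\langle n|$ with $m\geq n$ lies in $\mathfrak{b}_1^+(\mathcal{H})$ (the diagonal entry is either $0$ or $1$, both real, and upper triangularity is immediate), and $\Tr(AB)=\langle n|A|m\rangle$. Choosing $m=n$ and using skew-hermiticity (which forces $\langle n|A|n\rangle\in i\mathbb{R}$), the vanishing of the imaginary part gives $\langle n|A|n\rangle=0$. For $m>n$, testing with $B=|m\rangle\langle n|$ and with $B=i|m\rangle\langle n|$ (still in $\mathfrak{b}_1^+(\mathcal{H})$, since the diagonal is now $0$) yields $\Im\langle n|A|m\rangle=0$ and $\Re\langle n|A|m\rangle=0$, hence $\langle n|A|m\rangle=0$. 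Skew-hermiticity $\langle m|A|n\rangle=-\overline{\langle n|A|m\rangle}$ finishes the other half, so $A$ has zero matrix in the basis $\{|n\rangle\}$ and hence $A=0$.

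For non-degeneracy in the $B$-variable, assume $\Im\Tr(AB)=0$ for every $A\in\mathfrak{u}(\mathcal{H})$. Use $A=i|n\rangle\langle n|$ (skew-hermitian) to get $\Im(i\langle n|B|n\rangle)=\langle n|B|n\rangle=0$, relying on the fact that diagonal entries of $B$ are real. For $m>n$, take $A=|m\rangle\langle n|-|n\rangle\langle m|$ and $A=i(|m\rangle\langle n|+|n\rangle\langle m|)$, both skew-hermitian; since $B$ is upper triangular, the matrix element $\langle n|B|m\rangle$ vanishes, so $\Tr(AB)$ reduces to $\mp\langle m|B|n\rangle$ respectively $i\langle m|B|n\rangle$, and the vanishing of the imaginary parts gives $\Im\langle m|B|n\rangle=\Re\langle m|B|n\rangle=0$. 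Combined with upper triangularity ($\langle n|B|m\rangle=0$ for $m>n$) and the diagonal vanishing, all matrix entries of $B$ are zero, so $B=0$.

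The only subtlety is keeping track of which rank-one operators actually lie in $\mathfrak{b}_1^+(\mathcal{H})$ or $\mathfrak{u}(\mathcal{H})$ (the reality constraint on diagonals must be respected), and coupling two test operators, one multiplied by $i$, to recover both real and imaginary parts of each off-diagonal matrix element. No deeper obstacle is expected: the argument is entirely a basis-level computation.
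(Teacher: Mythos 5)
Your proof is correct and is exactly the ``direct calculation using the operators $E_{nm}=|n\rangle\langle m|$'' that the paper's one-line proof alludes to, with the membership checks (reality of diagonal entries, skew-hermiticity) and the trick of pairing each test operator with its multiple by $i$ carried out explicitly. Nothing further is needed.
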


\begin{proof}
  It follows by direct calculation using e.g. operators
  $E_{nm} := |n\rangle\langle m |$.
\end{proof}

For finite-dimensional $\mathcal{H}$, this Proposition implies that
$\mathfrak{b}_{1}^+(\mathcal{H})$ can be identified with the dual of
$\mathfrak{u}(\mathcal{H})$. %
In the rest of the paper we will assume that $\mathcal{H}$ is
infinite-dimensional. In this case $\mathfrak{b}_{1}^+(\mathcal{H})$
can be identified with a proper subspace of
$\mathfrak{u}^*(\mathcal{H})$ using the duality pairing defined in
Proposition~\ref{dualityp}.
\begin{theorem}\label{b1}
  The coadjoint action of $\mathfrak{u}(\mathcal{H})$ is unbounded on
  the image of $\mathfrak{b}_{1}^+(\mathcal{H})$ in
  $ \mathfrak{u}^*(\mathcal{H})$.
\end{theorem}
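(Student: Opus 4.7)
The plan is to rewrite the coadjoint action as a commutator in $L_1(\mathcal{H})$ and then identify its failure to stay in $\mathfrak b_1^+(\mathcal H)$ with the classical unboundedness of the triangular truncation on trace-class operators.

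First I would compute, for $X \in \mathfrak u(\mathcal H)$ and $B \in \mathfrak b_1^+(\mathcal H)$, using the pairing $\langle A, B\rangle = \Im\Tr(AB)$ of Proposition~\ref{dualityp} and cyclicity of the trace, that
\begin{equation*}
(\ad^{*}_X B)(Y) = -\Im\Tr\bigl([X,Y]B\bigr) = \Im\Tr\bigl(Y[X,B]\bigr), \qquad Y \in \mathfrak u(\mathcal H).
\end{equation*}
Thus $\ad^{*}_X B \in \mathfrak u^{*}(\mathcal H)$ is the functional on $\mathfrak u(\mathcal H)$ represented by the trace-class operator $[X,B]$.

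Second, I would determine exactly when such a functional lies in the image of $\mathfrak b_1^+(\mathcal H)$ inside $\mathfrak u^{*}(\mathcal H)$. The identity $\Im\Tr(AT) = \tfrac{1}{2i}\Tr\bigl(A(T+T^{*})\bigr)$ for $A \in \mathfrak u(\mathcal H)$ shows that the annihilator of $\mathfrak u(\mathcal H)$ in $L_1(\mathcal H)$ is exactly $\mathfrak u_1(\mathcal H)$. Hence $\ad^{*}_X B$ is represented by some $C \in \mathfrak b_1^+(\mathcal H)$ iff $[X,B]$ splits as $U+C$ with $U \in \mathfrak u_1(\mathcal H)$ and $C \in \mathfrak b_1^+(\mathcal H)$. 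Writing $T = [X,B] = T_- + T_0 + T_+$, where $T_-$ collects the entries $\langle m|T|n\rangle$ with $m < n$, $T_0$ is the diagonal part, and $T_+$ collects the entries with $m > n$, the skew-Hermiticity of $U$ forces $U = -T_- + T_-^{*} - i\,\Im(T_0)$ and hence $C = T_+ + T_-^{*} + \Re(T_0)$. Since $T \in L_1(\mathcal H)$ always has $T_0 \in L_1(\mathcal H)$, the existence of the splitting is equivalent to $T_- \in L_1(\mathcal H)$.

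Third, I would exhibit $X \in \mathfrak u(\mathcal H)$ with $\|X\|_{\infty}$ bounded and a sequence $B_n \in \mathfrak b_1^+(\mathcal H)$ of bounded trace norms such that $[X,B_n]_-$ either fails to be trace class or has trace norm tending to infinity. The existence of such a pair is at bottom the statement that the strict triangular truncation $L_1(\mathcal H) \to L_1(\mathcal H)$ is unbounded, a classical theorem of Macaev (see also Gohberg--Krein). My concrete candidate is to take $X$ proportional to the discrete Hilbert transform relative to $\{|n\rangle\}_{n\in\mathbb Z}$, which is bounded and skew-Hermitian, together with $B_n$ a suitably normalised diagonal operator supported on $\{|k\rangle : 1 \leq k \leq n\}$, so that $[X,B_n]$ yields rectangular Hilbert-type Hankel blocks as its strict triangular part, whose trace norms can be shown to grow faster than $\|B_n\|_1$.

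The main obstacle is this third step: turning the abstract unboundedness of the triangular truncation into an explicit realisation of $[X,B_n]_-$ with a divergent ratio $\|[X,B_n]_-\|_1 / \|B_n\|_1$. Steps~1 and~2 are essentially the annihilator computation and a linear-algebraic splitting; it is the quantitative analysis of the truncated Hilbert-matrix blocks (or a comparable explicit family) that carries the content of the theorem, and it then allows one to conclude that no continuous coadjoint action of $\mathfrak u(\mathcal H)$ on $\mathfrak b_1^+(\mathcal H)$ can exist.
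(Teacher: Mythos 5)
Your first two steps are sound and match the paper's reduction: the coadjoint action is represented by the commutator $[X,B]\in L_1(\mathcal H)$, the annihilator of $\mathfrak u(\mathcal H)$ in $L_1(\mathcal H)$ is $\mathfrak u_1(\mathcal H)$, and the $\mathfrak b_1^+$-representative of $\ad^*_X B$ is exactly the triangular truncation $(T_{++}+\tfrac12 T_0)([X,B]+[X,B]^*)$ that the paper writes down in \eqref{b+coad}. The gap is in step 3, and your concrete candidate provably cannot work. If $B$ is \emph{diagonal}, then $[X,B]=\sum_k b_k\,[X,|k\rangle\langle k|]$, and each $[X,|k\rangle\langle k|]$ is a rank-$\le 2$ operator (a column of $X$ tensored against $\langle k|$ minus $|k\rangle$ tensored against a row of $X$). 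Any triangular truncation of such an operator is again rank $\le 2$ with trace norm at most $2\|X\|_\infty$, uniformly in $k$. Hence
\begin{equation*}
\bigl\|T_{++}([X,B])\bigr\|_1 \;\le\; \sum_k |b_k|\,\bigl\|T_{++}([X,|k\rangle\langle k|])\bigr\|_1 \;\le\; 2\,\|X\|_\infty\,\|B\|_1
\end{equation*}
for every bounded $X$ and every diagonal trace-class $B$ — and the same bound holds for the other truncations entering your representative $C=T_++T_-^*+\Re(T_0)$. (In the special case where $B_n$ is a multiple of the projection onto $\mathrm{span}\{|k\rangle:1\le k\le n\}$ one sees this even more directly: the nonzero entries of $[X,B_n]$ occupy whole off-diagonal blocks, each entirely on one side of the diagonal, so the truncation merely selects compressions of $[X,B_n]$ and cannot increase the trace norm.) So the divergent ratio $\|[X,B_n]_-\|_1/\|B_n\|_1$ you hope for can never be achieved with diagonal $B_n$, whatever bounded $X$ you pair them with; the Hilbert-matrix choice of $X$ does not rescue this.

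The paper sidesteps the need for any explicit trace-norm estimate. It takes the unboundedness of $T_+$ on $L_1$ as a cited classical fact, chooses an \emph{arbitrary} witness: Hermitian trace-class $K_n$ on $\mathcal H_-$ with $\|K_n\|_1\le 1$ and $\|T_+(K_n)\|_1\to\infty$, and then transports it into the problem by a $2\times 2$ block trick relative to $\mathcal H=\mathcal H_+\oplus\mathcal H_-$: the operator $B_n=\begin{pmatrix}0&uK_n\\0&0\end{pmatrix}$ lies in $\mathfrak b_1^+(\mathcal H)$ automatically (its only nonzero block maps $\mathcal H_-$ into $\mathcal H_+$, hence is upper triangular), and the bounded skew-Hermitian $A=\begin{pmatrix}0&u\\-u^*&0\end{pmatrix}$ satisfies $[A,B_n]=\begin{pmatrix}uK_nu^*&0\\0&-K_n\end{pmatrix}$, whose truncation is essentially $T_+(K_n)$ up to unitary conjugation. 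Crucially $B_n$ is \emph{not} diagonal — it is a genuinely off-diagonal block operator — which is what lets the commutator land on a block-diagonal Hermitian operator whose truncation blows up. To repair your argument you would either need to adopt this kind of transport of a known witness, or replace your diagonal $B_n$ by a family rich enough that the truncation genuinely cuts through the support of $[X,B_n]$, and then carry out the hard quantitative estimates you correctly identify as the crux.
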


\begin{proof}
  Denote by $T_+$ (resp. $T_{++}$) the linear transformation
  truncating an operator to the upper triangular part (resp. strictly
  upper triangular part) with respect to the Hilbert basis
  $\{|n\rangle\}_{n\in\mathbb{Z}}$~:
  \begin{align}\label{T++}
    \langle m | T_{++}(A) n\rangle &:= \begin{cases} \langle m | A n\rangle & \text{ if } m> n\\ 0 & \text{ if } m\leq n\end{cases}\\
    \label{T+}
    \langle m | T_{+}(A) n\rangle &:= \begin{cases} \langle m | A n\rangle & \textrm{ if } m\geq n\\ 0 & \text{ if } m<n\end{cases}
  \end{align}

  Recall that $T_+$ is unbounded on $L_\infty(\mathcal{H})$, as well
  as on $L_1(\mathcal{H})$ (see \cite{kwapien70}, \cite{gohberg70},
  \cite{davidson}), but they are bounded on the space of
  Hilbert--Schmidt operators $L_2(\mathcal H)$ since they are just
  orthogonal projections.

  Let us denote by $T_0$ the diagonal truncation defined by
  $T_0 = T_+ - T_{++}$, which is bounded on $L_\infty(\mathcal{H})$
  and $L_1(\mathcal{H})$.

  Let us consider the coadjoint action of $\mathfrak{u}(\mathcal{H})$
  on the image of $\mathfrak{b}_{1}^+(\mathcal{H})$ in
  $\mathfrak{u}^*(\mathcal{H})$.

  An element $B\in \mathfrak{b}_{1}^+(\mathcal{H})$ defines a
  functional $C\mapsto \Im\Tr CB$, $C \in \mathfrak{u}(\mathcal{H})$,
  on which $A\in \mathfrak{u}(\mathcal{H})$ acts by coadjoint action
  as:
  \begin{equation*}
    C\mapsto \Im\Tr [A, C]B = -\Im \Tr C[A, B]. %
  \end{equation*}
  Since for any $C \in \mathfrak{u}(\mathcal{H})$ and
  $D\in L_1(\mathcal{H})$,
  \begin{equation*}
    \Im \Tr C D = \Im \Tr C \big((T_{++} + \tfrac{1}{2}T_{0})(D + D^*
    )\big),
  \end{equation*}
  we have
  \begin{equation}\label{b+coad}
    \ad^*_A B = - (T_{++} + \tfrac{1}{2} T_{0})([A, B] + [A, B]^* ).%
  \end{equation}

  We show that this coadjoint action is unbounded on
  $\mathfrak{b}_{1}^+(\mathcal{H})$. To this end let us decompose the
  Hilbert space $\mathcal{H}$ into the sum of two orthogonal
  infinite-dimensional closed subspaces~:
  \begin{equation*}
    \mathcal{H} = \mathcal{H}_+\oplus\mathcal{H}_-,
  \end{equation*}
  where $\mathcal{H}_+$ is the Hilbert space generated by
  $\{|n\rangle\}_{n\in\mathbb{N}\cup\{0\}}$ and $\mathcal{H}_-$ is the
  Hilbert space generated by
  $\{|n\rangle\}_{-n\in\mathbb{N}\setminus\{0\}}$. Let us define a
  unitary operator $u:\mathcal{H}_-\rightarrow \mathcal{H}_+$ by
  $u |-n\rangle = |n-1\rangle$, $n\in\mathbb{N}$.

  From unboundedness of $T_+$ it follows that there exists a sequence
  of Hermitian trace class operators $K_n$ on $\mathcal{H}_-$ such
  that
  \begin{enumerate}
  \item $\|K_{n}\|_1\leq 1$,
  \item $\lim\limits_{n\rightarrow+\infty} \|T_+(K_n)\|_1 = +\infty$.%
  \end{enumerate}

  The bounded linear operators whose expressions with respect to the
  decomposition $\mathcal{H} = \mathcal{H}_+\oplus\mathcal{H}_-$ read
  \begin{equation*}
    B_n := \begin{pmatrix} 0 & uK_n\\0 & 0 \end{pmatrix}
  \end{equation*}
  belong to $\mathfrak{b}_{1}^+(\mathcal{H})$. The skew-hermitian
  operator $A$ whose expression with respect to the decomposition
  $\mathcal{H} = \mathcal{H}_+\oplus\mathcal{H}_-$ reads
  \begin{equation*}
    A := \begin{pmatrix}0 & u\\-u^* & 0 \end{pmatrix}
  \end{equation*}
  is bounded. Moreover
  \begin{equation*} [A, B_n] = \left[\begin{pmatrix}0 & u\\-u^* &
        0 \end{pmatrix}, \begin{pmatrix}0 & uK_n\\0 &
        0 \end{pmatrix}\right]=
    \begin{pmatrix} uK_nu^* & 0\\0 & -K_n \end{pmatrix}.
  \end{equation*}
  Since $K_n$ is Hermitian, from \eqref{b+coad} we get
  \begin{equation*}
    \ad^*_A B_n = - (T_{++} + \tfrac{1}{2} T_{0})([A, B_n]).
  \end{equation*}
  It follows that $\|\ad^*_A B_n\|_1\rightarrow \infty$ as
  $n \rightarrow \infty$ whereas $\|B_n\|_1 = \|K_{n}\|_1\leq 1$.
\end{proof}

We conclude from Remark~\ref{remdual} the following corollary.
\begin{corollary}
  There is no Banach Poisson--Lie group structure
  $\left(\operatorname{U}(\mathcal{H}), \mathbb{F}, \pi\right)$ on
  $\operatorname{U}(\mathcal{H})$ with bundle $\mathbb{F}$ such that
  $\mathbb{F}_e = \mathfrak{b}^+_1(\mathcal{H})\subset
  \mathfrak{u}(\mathcal{H})^*$.
\end{corollary}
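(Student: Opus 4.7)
My plan is to derive the statement as a direct contradiction combining the two results immediately preceding it. Suppose, for the sake of contradiction, that such a Banach Poisson--Lie group structure $(\operatorname{U}(\mathcal{H}), \mathbb{F}, \pi)$ exists with $\mathbb{F}_e = \mathfrak{b}_1^+(\mathcal{H})$, the topology on $\mathbb{F}_e$ being the one inherited from $\mathfrak{b}_1^+(\mathcal{H}) \subset L_1(\mathcal{H})$ via the duality pairing of Proposition~\ref{dualityp}.

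The first step is to invoke Remark~\ref{remdual}: the compatibility between $\pi$ and the group multiplication forces $\mathfrak{u}(\mathcal{H})$ to act continuously on $\mathbb{F}_e$ by coadjoint action. Concretely, this means that the bilinear map $(A,B)\mapsto \ad^*_A B$ from $\mathfrak{u}(\mathcal{H})\times\mathfrak{b}_1^+(\mathcal{H})$ to $\mathfrak{b}_1^+(\mathcal{H})$ is continuous; in particular, for every fixed $A\in\mathfrak{u}(\mathcal{H})$, the operator $\ad^*_A:\mathfrak{b}_1^+(\mathcal{H})\to\mathfrak{b}_1^+(\mathcal{H})$ must be bounded with respect to the trace norm.

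The second step is to contradict this conclusion using Theorem~\ref{b1}, which explicitly produces a fixed skew-hermitian $A$ and a sequence $\{B_n\}\subset\mathfrak{b}_1^+(\mathcal{H})$ with $\|B_n\|_1\leq 1$ and $\|\ad^*_A B_n\|_1\to\infty$. This shows precisely that the coadjoint action of this single $A$ fails to be a bounded operator on $\mathfrak{b}_1^+(\mathcal{H})$, contradicting the continuity requirement derived in the previous step.

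There is no real obstacle here: all the analytical work has already been carried out in Theorem~\ref{b1}, and the corollary only requires recognising that the necessary continuity condition for a Poisson--Lie structure (Remark~\ref{remdual}) is incompatible with the unboundedness established there. The only minor point worth verifying is that the Banach space topology on $\mathbb{F}_e$ forced by the bundle structure coincides with the $L_1$-norm topology, which is automatic from the hypothesis $\mathbb{F}_e = \mathfrak{b}_1^+(\mathcal{H})$ as Banach spaces.
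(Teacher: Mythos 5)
Your proposal is correct and follows exactly the paper's route: the paper derives the corollary in one line from Remark~\ref{remdual} (continuity of the coadjoint action on $\mathbb{F}_e$ is forced by the Poisson--Lie compatibility) combined with the unboundedness established in Theorem~\ref{b1}. Your additional remark about the topology on $\mathbb{F}_e$ is a reasonable clarification but does not change the argument.
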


\subsection{A subspace of $\mathfrak{u}^*(\mathcal{H})$ on which
  $\mathfrak{u}(\mathcal{H})$ acts continuously by coajoint
  action}\label{stable}

Consider the continuous linear map
$\Phi: L_{1}(\mathcal{H}) \rightarrow \mathfrak{u}^*(\mathcal{H})$
which maps an operator $a\in L_{1}(\mathcal{H})$ to the linear
functional on $\mathfrak{u}(\mathcal{H})$ given by
\begin{equation*}
  b \mapsto \Im \Tr a b
\end{equation*}
where $\Im \Tr a b $ is the imaginary part of the trace of
$a b \in L_{1}(\mathcal{H})$.
\begin{proposition}
  The kernel of $\Phi$ equals $\mathfrak{u}_{1}(\mathcal{H})$,
  therefore $L_{1}(\mathcal{H})/\mathfrak{u}_{1}(\mathcal{H})$ injects
  into the dual space $\mathfrak{u}(\mathcal{H})^*$ and can be
  identified with the predual space $\mathfrak{u}(\mathcal{H})_*$. It
  is closed and preserved by the coadjoint action of
  $\operatorname{U}(\mathcal{H})$. Moreover functionals given by
  elements of $\mathfrak{b}_{1}^+(\mathcal{H})$ form a proper dense
  subspace.
\end{proposition}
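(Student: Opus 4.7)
The plan is to prove the four assertions (kernel of $\Phi$, predual identification, closedness with coadjoint invariance, density and properness of $\mathfrak{b}_1^+$) in sequence, using as main tool the hermitian/skew-hermitian decomposition $a = a_h + a_{sh}$ of $a\in L_1(\mathcal{H})$ together with a parity argument: for $b \in \mathfrak{u}(\mathcal{H})$, the cyclicity of the trace and $b^* = -b$ yield $\Tr(a_h b) \in i\mathbb{R}$ and $\Tr(a_{sh} b) \in \mathbb{R}$, so $\Phi(a)(b) = \Im \Tr(a_h b)$.

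For the kernel, $\Phi(a) = 0$ forces $\Tr(a_h b) = 0$ for every $b \in \mathfrak{u}(\mathcal{H})$; taking $b = ic$ with $c$ bounded hermitian gives $\Tr(a_h c) = 0$ for all bounded hermitian $c$, hence for all $c \in L_\infty(\mathcal{H})$ by real-linear combination, and the duality $L_1^* \cong L_\infty$ forces $a_h = 0$, so $a \in \mathfrak{u}_1(\mathcal{H})$. To identify $L_1/\mathfrak{u}_1$ with a predual of $\mathfrak{u}(\mathcal{H})$, take any continuous real-linear $\varphi$ on $L_1/\mathfrak{u}_1$, lift to $\tilde\varphi : L_1 \to \mathbb{R}$ vanishing on $\mathfrak{u}_1$, and via the complex-linear extension $\psi(x) := \tilde\varphi(x) - i\tilde\varphi(ix)$ and complex duality write $\tilde\varphi(x) = \Re \Tr(xY)$ with $Y \in L_\infty(\mathcal{H})$. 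The same parity analysis applied to $\Re\Tr(uY)=0$ for $u\in\mathfrak{u}_1$ forces $Y$ hermitian, so $B := iY \in \mathfrak{u}(\mathcal{H})$ realizes $\varphi([x]) = \Im \Tr(xB)$; a rank-one test and the bound $|\Im\Tr(xB)|\le\|[x]\|\,\|B\|_\infty$ yield isometry, so $\bar\Phi$ is an isometric isomorphism onto a predual $\mathfrak{u}_*$. Closedness in $\mathfrak{u}^*$ is then automatic because $\bar\Phi$ coincides with the canonical embedding of $\mathfrak{u}_*$ into its bidual $\mathfrak{u}_*^{**} \cong \mathfrak{u}^*$, which is isometric. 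Coadjoint invariance is a one-line cyclic-trace check: $(\Ad^*_u \Phi(a))(b) = \Im \Tr(a u^{-1} b u) = \Phi(u a u^{-1})(b)$, and conjugation by a unitary preserves $L_1(\mathcal{H})$ isometrically.

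For density of $\Phi(\mathfrak{b}_1^+)$ in $L_1/\mathfrak{u}_1$, every class has a hermitian representative $h := (a+a^*)/2$ since $(a - a^*)/2 \in \mathfrak{u}_1$, so it suffices to approximate $[h]$ with $h$ hermitian trace class. Let $P_N$ denote the orthogonal projection onto $\operatorname{span}\{|k\rangle : |k|\le N\}$ and set $h_N := P_N h P_N$; then $h_N$ is finite-rank hermitian, concentrated in a finite block of the basis, and a dominated-convergence argument gives $\|h_N - h\|_1 \to 0$. Decompose each $h_N$ as
\[
h_N = b_N + u_N, \qquad b_N := T_0(h_N) + 2 T_{++}(h_N), \qquad u_N := T_{--}(h_N) - T_{++}(h_N),
\]
where $T_{--}$ denotes the strictly lower triangular truncation (defined analogously to $T_{++}$). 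Since $h_N^* = h_N$ gives $T_{--}(h_N) = T_{++}(h_N)^*$ and real diagonal, $u_N$ is skew-hermitian and finite-rank (hence in $\mathfrak{u}_1$) and $b_N$ is upper triangular and finite-rank with real diagonal (hence in $\mathfrak{b}_1^+$). Thus $[b_N] = [h_N] \to [h] = [a]$ in $L_1/\mathfrak{u}_1$.

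The main obstacle is properness. Since $T_+$ is unbounded on $L_1(\mathcal{H})$ so is $T_{--}$, and $T_{--}$ is closed as a densely defined operator on $L_1$ (via the entrywise-limit criterion for $L_1$-convergence), so the closed graph theorem yields $a_0 \in L_1$ with $T_{--}(a_0) \notin L_1$. Splitting $a_0$ into hermitian and skew-hermitian parts and, if needed, multiplying the offending piece by $i$, one obtains a hermitian $h \in L_1$ with $T_{--}(h) \notin L_1$. If such an $h$ admitted a decomposition $h = b + u$ with $b \in \mathfrak{b}_1^+$ and $u \in \mathfrak{u}_1$, entry matching combined with skew-hermicity of $u$ would force $T_{--}(u) = T_{--}(h)$, $T_{++}(u) = -T_{--}(h)^*$, and $T_0(u)$ pure imaginary; using $h^* = h$ so that $T_{--}(h)^* = T_{++}(h)$, the requirement $u \in L_1$ would force $T_{--}(h) - T_{++}(h) = h - T_0(h) - 2 T_{++}(h) \in L_1$, hence $T_{++}(h) \in L_1$ and $T_{--}(h) = T_{++}(h)^* \in L_1$, contradicting the choice of $h$.
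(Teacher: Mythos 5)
Your proof is correct, and while the first three assertions (kernel, predual identification, closedness and coadjoint invariance) follow essentially the same route as the paper --- the paper reduces the kernel computation to $\Tr\big((a+a^*)b\big)=0$ and invokes the splitting $L_1(\mathcal{H})=\mathfrak{u}_1(\mathcal{H})\oplus i\mathfrak{u}_1(\mathcal{H})$ together with the known duality $(i\mathfrak{u}_1(\mathcal{H}))^*\simeq\mathfrak{u}(\mathcal{H})$, which you instead re-derive by representing functionals via the complex $L_1$--$L_\infty$ duality --- your treatment of the last assertion genuinely diverges. For density, the paper uses a Hahn--Banach argument: any $a\in\mathfrak{u}(\mathcal{H})\simeq\big(L_1(\mathcal{H})/\mathfrak{u}_1(\mathcal{H})\big)^*$ vanishing on $\mathfrak{b}_1^+(\mathcal{H})$ is zero by the non-degeneracy of the pairing (Proposition~\ref{dualityp}), hence the subspace is dense; you instead give a constructive approximation by finite blocks $P_NhP_N$ of a hermitian representative, explicitly split into an upper-triangular part with real diagonal and a skew-hermitian remainder. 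Both are valid; the paper's is shorter and reuses Proposition~\ref{dualityp}, yours is self-contained and exhibits the approximating sequence. For properness, the paper only remarks that $\mathfrak{b}_1^+(\mathcal{H})$ corresponds to those elements of $i\mathfrak{u}_1(\mathcal{H})$ whose triangular truncation is trace class, leaving the existence of an element outside this set implicit in the cited unboundedness of $T_+$ on $L_1(\mathcal{H})$; your closed-graph argument producing a hermitian $h\in L_1(\mathcal{H})$ with $T_{--}(h)\notin L_1(\mathcal{H})$, followed by the entrywise matching showing no decomposition $h=b+u$ with $b\in\mathfrak{b}_1^+(\mathcal{H})$, $u\in\mathfrak{u}_1(\mathcal{H})$ can exist, supplies the detail the paper omits and is a worthwhile addition.
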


\begin{proof}$\;$
  \begin{itemize}
  \item It is clear that $\mathfrak{u}_{1}(\mathcal{H})$ is contained
    in the kernel of $\Phi$ since the product of two skew-hermitian
    operators has a real trace. Let $a\in L_{1}(\mathcal{H})$ be such
    that $\Im \Tr a b = 0$ for any $b\in \mathfrak{u}(\mathcal{H})$.

    One has for any $b \in \mathfrak{u}(\mathcal{H})$ and
    $a\in L_1(\mathcal{H})$
    \begin{equation*}
      \Im \Tr a b = \tfrac{1}{2i}\left(\Tr a b - \overline{\Tr a b}\right) =
      \tfrac{1}{2i}\left(\Tr a b + \Tr b a^* \right) = %
      \tfrac{1}{2i}\left(\Tr (a + a^*) b \right).
    \end{equation*}
    Consequently,
    \begin{equation*}
      a\in \operatorname{ker}(\Phi) \Rightarrow \Tr (a + a^*) b = 0 \quad
      \forall b\in \mathfrak{u}(\mathcal{H}).
    \end{equation*}
    By linearity of the trace,
    $\Tr (a + a^*) b = 0 \Rightarrow \Tr (a + a^*) ib = 0$. It follows
    that if $a\in \operatorname{ker}(\Phi)$,
    $\Tr (a + a^*) \tilde b = 0$ for any
    $\tilde b \in L_{\infty}(\mathcal{H})$. Since the dual of
    $L_1(\mathcal{H})$ viewed as complex Banach space can be
    identified with $ L_{\infty}(\mathcal{H})$ using the trace, one
    has
    \begin{equation*}
      a\in \operatorname{ker}(\Phi)\Rightarrow a + a^* = 0\in L_1(\mathcal{H}).
    \end{equation*}
    Hence the kernel of $\Phi$ equals $\mathfrak{u}_{1}(\mathcal{H})$.
  \item From the previous point, we have an injection
    \begin{equation}\label{injection}
      L_1(\mathcal{H})/\mathfrak{u}_{1}(\mathcal{H}) \hookrightarrow \mathfrak{u}(\mathcal{H})^*.
    \end{equation}

    From the Banach decomposition
    \begin{equation}\label{L1dec}
      L_{1}(\mathcal{H}) = \mathfrak{u}_1(\mathcal{H}) \oplus i\mathfrak{u}_1(\mathcal{H})
    \end{equation}
    given by
    $a\mapsto \left(\tfrac{1}{2}(a-a^*)~; \tfrac{1}{2}(a+a^*)\right)$,
    it follows that
    \begin{equation}\label{first}
      L_1(\mathcal{H})/\mathfrak{u}_{1}(\mathcal{H})\simeq i\mathfrak{u}_{1}(\mathcal{H}).
    \end{equation}
    It is known that
    \begin{equation*}
      (i\mathfrak{u}_{1}(\mathcal{H}))^*\simeq \mathfrak{u}(\mathcal{H}),
    \end{equation*}
    see e.g. \cite[Example 7.10]{OR}.

    Thus the injection~\eqref{injection} is in fact the natural
    injection of the Banach space
    $L_1(\mathcal{H})/\mathfrak{u}_{1}(\mathcal{H})$ into its bidual
    \begin{equation*}
      \left(L_1(\mathcal{H})/\mathfrak{u}_{1}(\mathcal{H})\right)^{**}\simeq
      \mathfrak{u}(\mathcal{H})^*.
    \end{equation*}
    Its image is therefore a closed subspace of
    $\mathfrak{u}(\mathcal{H})^*.$

  \item Let us show that the range of $\Phi$ is preserved by the
    coadjoint action of $\operatorname{U}(\mathcal{H})$. For
    $a\in L_1(\mathcal{H})$ and $b\in \mathfrak{u}(\mathcal{H})$, one
    has
    \begin{align*}
      \Ad^*_g(\Phi(a))(b) & = \Phi(a)(\Ad_g b) = \Im \Tr a gbg^{-1}\\ & = \Im \Tr g^{-1} a g b = \Phi(g^{-1} a g)(b),
    \end{align*}
    where $g^{-1} a g$ belongs to $L_1(\mathcal{H})$. Note that for
    $a\in \mathfrak{u}_{1}(\mathcal{H})$, $g^{-1} a g$ belongs to
    $\mathfrak{u}_{1}(\mathcal{H})$ for any
    $g\in \operatorname{U}(\mathcal{H})$. Hence the coadjoint action
    of $g\in\operatorname{U}(\mathcal{H})$ on
    $L_1(\mathcal{H})/\mathfrak{u}_{1}(\mathcal{H})$ reads
    \begin{equation}\label{coad_l/u}
      \Ad^*_g[a]_{\mathfrak{u}_1} = [g^{-1} a g]_{\mathfrak{u}_1},
    \end{equation}
    where $[a]_{\mathfrak{u}_1}$ denotes the class of
    $a\in L_1(\mathcal{H})$ modulo $\mathfrak{u}_{1}(\mathcal{H})$.

  \item Since
    $\mathfrak{b}_1^+(\mathcal{H}) \cap \mathfrak{u}_{1}(\mathcal{H})
    = \{0\}$, we have
    $\mathfrak{b}_1^+(\mathcal{H}) \hookrightarrow
    L_1(\mathcal{H})/\mathfrak{u}_{1}(\mathcal{H}) \simeq
    i\mathfrak{u}_1(\mathcal{H})$. Under this identification an
    element $b\in \mathfrak{b}_1^+(\mathcal{H}) $ is sent to
    $\tfrac{1}{2}(b+b^*)$. Hence $\mathfrak{b}_1^+(\mathcal{H})$
    corresponds to those elements in $i\mathfrak{u}_1(\mathcal{H})$
    that have a triangular truncation in $L_1(\mathcal{H})$.

    Any functional
    $a \in \mathfrak{u}(H)\simeq
    (L_1(\mathcal{H})/\mathfrak{u}_{1}(\mathcal{H}))^* $ vanishing on
    $\mathfrak{b}_1^+(\mathcal{H})$ by Proposition~\ref{dualityp} is
    zero. Hence $\mathfrak{b}_1^+(\mathcal{H})$ is dense in
    $L_1(\mathcal{H})/\mathfrak{u}_{1}(\mathcal{H})$.
  \end{itemize}
\end{proof}

\section{The unitary group $U(\mathcal{H})$ as a Banach Poisson--Lie
  group}

In order to define a Banach Poisson--Lie group structure on
$U(\mathcal{H})$ we need to introduce the Lie algebra
$\mathfrak{u}_2(\mathcal{H})$ of Hilbert--Schmidt skew-hermitian
operators, as well as the Lie algebra $\mathfrak{b}^+_2(\mathcal{H})$
of Hilbert--Schmidt upper triangular operators with real coefficients
on the diagonal. Note that we have a direct sum decomposition of the
space $L_2(\mathcal{H})$ of Hilbert--Schmidt operators into
\begin{equation*}
  L_2(\mathcal{H}) =
  \mathfrak{u}_2(\mathcal{H})\oplus\mathfrak{b}^+_2(\mathcal{H}).
\end{equation*}
The corresponding projections $p_{\mathfrak{u}_2}$ and
$p_{\mathfrak{b}^+_2}$ from $L_2(\mathcal{H})$ onto
$\mathfrak{u}_2(\mathcal{H})$ and $\mathfrak{b}^+_2(\mathcal{H})$
respectively are continuous.

Since $\mathfrak{u}_2(\mathcal{H})$ is invariant by conjugation by a
unitary operator, but $\mathfrak{b}^+_2(\mathcal{H})$ is not, one has
for $x \in L_1(\mathcal{H})$ and $g \in U(\mathcal{H})$,
\begin{equation}\label{b}
  p_{\mathfrak{b}_2^+}(g^{-1} x\, g)
  = p_{\mathfrak{b}_2^+}(g^{-1} p_{\mathfrak{b}_2^+}(x)\, g),
\end{equation}
and
\begin{equation}\label{u}
  p_{\mathfrak{u}_2}(g^{-1} x\, g) = g^{-1} p_{\mathfrak{u}_2}(x)\, g + p_{\mathfrak{u}_2}(g^{-1} p_{\mathfrak{b}_2^+}(x)\, g).
\end{equation}

\begin{theorem}\label{UresPoisson}
  Consider the Banach Lie group $\operatorname{U}(\mathcal{H})$ and
  define
  \begin{itemize}
  \item the precotangent bundle
    $\mathbb{F}= T_*\operatorname{U}(\mathcal{H}) \subset
    T^*\operatorname{U}(\mathcal{H})$ by right translations
    \begin{equation*}
      \mathbb{F}_g = R_{g^{-1}}^* \big(\operatorname{L}_{1}(\mathcal{H})/\mathfrak{u}_{1}(\mathcal{H})\big) = R_{g^{-1}}^* \mathfrak{u}(\mathcal{H})_*,
    \end{equation*}
  \item the map
    $\Pi_r:\operatorname{U}(\mathcal{H})\rightarrow
    \Lambda^2\mathbb{F}_e^*$ by
    \begin{equation}\label{Pig}
      \Pi_r(g)([x_1]_{\mathfrak{u}_{1}}, [x_2]_{\mathfrak{u}_{1}}) = \Im\Tr\big(g^{-1}\,p_{\mathfrak{b}^+_2}(x_1)\,g\; p_{\mathfrak{u}_2}(g^{-1}\,p_{\mathfrak{b}^+_2}(x_2)\,g)\big),
    \end{equation}
  \item the tensor $\pi\in\Lambda^2\mathbb{F}^*$ by
    $\pi(g) = R^{**}_g \Pi_r(g)$.
  \end{itemize}
  Then $\left(\operatorname{U}(\mathcal{H}), \mathbb{F}, \pi\right)$
  is a Banach Poisson--Lie group. On the space of smooth functions
  with differentials in $\mathbb{F}$, the Poisson bracket reads:
  \begin{equation*}
    \{f, h\}(g) = %
    \Pi_r(g)(R^{*}_g df_g, R^{*}_g dh_g) = \Pi_r(g)(df_g\circ R_g,
    dh_g\circ R_g).
  \end{equation*}
\end{theorem}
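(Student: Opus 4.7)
I plan to verify the four requirements of Definition~\ref{BPLG} in order, exploiting throughout that the splitting $L_2(\mathcal{H}) = \mathfrak{u}_2(\mathcal{H}) \oplus \mathfrak{b}_2^+(\mathcal{H})$ is a Manin-triple-type Lagrangian decomposition for the real pairing $(x,y)\mapsto \Im\Tr(xy)$: both summands are isotropic Lie subalgebras in nondegenerate duality. The previous section identifies $\mathbb{F}_e = L_1(\mathcal{H})/\mathfrak{u}_1(\mathcal{H})$ with the predual $\mathfrak{u}(\mathcal{H})_*$, which provides the duality of $\mathbb{F}$ with $T\operatorname{U}(\mathcal{H})$ after right translation. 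Since $\mathfrak{u}_1(\mathcal{H}) \subset \mathfrak{u}_2(\mathcal{H})$, the projection $p_{\mathfrak{b}_2^+}$ annihilates $\mathfrak{u}_1(\mathcal{H})$, so $\Pi_r(g)$ descends to the quotient; boundedness $|\Pi_r(g)([x_1],[x_2])| \leq C\|x_1\|_1\|x_2\|_1$ follows from Cauchy--Schwarz in $L_2$.

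\textbf{Skew-symmetry.} Setting $y_i := g^{-1} p_{\mathfrak{b}_2^+}(x_i)\,g$ and writing $y_i = u_i + b_i$ in the direct sum, isotropy of $\mathfrak{u}_2$ collapses~\eqref{Pig} to $\Pi_r(g)([x_1],[x_2]) = \Im\Tr(b_1 u_2)$. Skew-symmetry then reduces to $\Im\Tr(b_1 u_2 + u_1 b_2) = \Im\Tr(y_1 y_2) - \Im\Tr(u_1 u_2) - \Im\Tr(b_1 b_2) = \Im\Tr(p_{\mathfrak{b}_2^+}(x_1)\,p_{\mathfrak{b}_2^+}(x_2)) = 0$, by cyclicity of the trace and the fact that the trace of a product of two upper triangular operators with real diagonal is real.

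\textbf{Closure, Jacobi, and multiplicativity.} Because $(L_2, \mathfrak{u}_2, \mathfrak{b}_2^+, \Im\Tr)$ is a Manin-triple-like splitting, $\Pi_r$ is the $r$-matrix Poisson tensor associated to the endomorphism $R = p_{\mathfrak{b}_2^+} - p_{\mathfrak{u}_2}$ of $L_2$, which satisfies the modified classical Yang--Baxter equation; the Jacobi identity then follows by the standard argument of Drinfeld and Semenov-Tian-Shansky, adapted to closed local sections of $\mathbb{F}$ in the spirit of~\cite{tumpach-bruhat}. The closure condition (item~1 of Definition~\ref{Poisson_tensor}) is verified by differentiating~\eqref{Pig} in $g$: the resulting 1-form is expressible via commutators of elements of $L_1$ and conjugations by $g$, which land in $L_1/\mathfrak{u}_1$ thanks to~\eqref{coad_l/u} and~\eqref{b}. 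For multiplicativity, writing $\alpha_i = [x_i]_{\mathfrak{u}_1}$, it suffices to verify the 1-cocycle identity $\Pi_r(gh)(\alpha_1,\alpha_2) = \Pi_r(g)(\alpha_1,\alpha_2) + \Pi_r(h)(\Ad^*_{g^{-1}}\alpha_1,\Ad^*_{g^{-1}}\alpha_2)$ with respect to the coadjoint action~\eqref{coad_l/u}. Expanding $\Pi_r(gh)$ via $(gh)^{-1} = h^{-1}g^{-1}$ and applying~\eqref{u} and~\eqref{b} to distribute the $h$-conjugations yields exactly these two terms together with a remainder of the form $\Im\Tr$ of a product of two skew-hermitian Hilbert--Schmidt operators, which vanishes because such a trace is real.

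\textbf{Main obstacle.} The delicate step is the Jacobi identity: although the Manin-triple algebra is classical, here one must carry it out at the group level within the constrained class of closed local sections of $\mathbb{F}$. The $r$-matrix calculation naturally lives in $L_2(\mathcal{H})$, whereas $\mathbb{F}_e$ is modelled on the finer quotient $L_1(\mathcal{H})/\mathfrak{u}_1(\mathcal{H})$, and securing the consistency between these two scales is precisely what equations~\eqref{b} and~\eqref{u} provide.
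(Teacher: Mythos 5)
Your treatment of multiplicativity is essentially the paper's: both arguments reduce the Poisson property of the multiplication to the right-translated cocycle identity for $\Pi_r$ and verify it by pushing the conjugations through \eqref{b} and \eqref{u} and discarding the remainder by isotropy of $\mathfrak{u}_2(\mathcal{H})$ for $\Im\Tr$. (One convention slip: with the paper's coadjoint action \eqref{coad_l/u}, for which $\Ad^*_g[x]_{\mathfrak{u}_1}=[g^{-1}xg]_{\mathfrak{u}_1}$, the correct cocycle is $\Pi_r(gh)(\alpha_1,\alpha_2)=\Pi_r(g)(\alpha_1,\alpha_2)+\Pi_r(h)(\Ad^*_{g}\alpha_1,\Ad^*_{g}\alpha_2)$, i.e.\ \eqref{cocy}, not the version with $\Ad^*_{g^{-1}}$.) Your skew-symmetry and continuity checks are correct and are left implicit in the paper.

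The genuine gap is the Jacobi identity. You assert that it ``follows by the standard argument of Drinfeld and Semenov-Tian-Shansky'' because $R=p_{\mathfrak{b}_2^+}-p_{\mathfrak{u}_2}$ solves the modified Yang--Baxter equation on $L_2(\mathcal{H})$. That argument, however, lives on a single space on which $R$ is a bounded operator, whereas here the tensor is evaluated on $\mathbb{F}_e=L_1(\mathcal{H})/\mathfrak{u}_1(\mathcal{H})$ and its sharp map must land in $\mathfrak{u}(\mathcal{H})$; the triangular truncation is unbounded on both $L_1(\mathcal{H})$ and $L_\infty(\mathcal{H})$ --- this is exactly the content of Theorem~\ref{b1} and the reason $\mathfrak{b}_1^+(\mathcal{H})$ had to be abandoned --- so $R$ does not act on the spaces that actually occur and the finite-dimensional bookkeeping does not transfer verbatim. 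Your closing paragraph correctly identifies this as the delicate step but does not carry it out, and that is precisely where the work is. The paper's proof does it by hand: it first records that $\iota_{[x_3]_{\mathfrak{u}_1}}\Pi_r(g)=-g\,p_{\mathfrak{u}_2}(g^{-1}p_{\mathfrak{b}_2^+}(x_3)g)\,g^{-1}$ lies in $\mathfrak{u}(\mathcal{H})$ (equation \eqref{c}, the structural fact that also licenses the use of Lemma~5.8 of \cite{tumpach-bruhat} and which your closure argument leaves vague), then differentiates the cocycle identity to obtain $T_e\Pi_r(Y)([x_1]_{\mathfrak{u}_1},[x_2]_{\mathfrak{u}_1})=\Im\Tr Y[p_{\mathfrak{b}_2^+}(x_1),p_{\mathfrak{b}_2^+}(x_2)]$, computes the two cyclic families of terms \eqref{pigr} and \eqref{lr1} explicitly, and observes that their cyclic sum collapses to $-\Im\Tr\,p_{\mathfrak{b}_2^+}(x_3)\,[p_{\mathfrak{b}_2^+}(x_1),p_{\mathfrak{b}_2^+}(x_2)]=0$ by isotropy of $\mathfrak{b}_2^+(\mathcal{H})$. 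To complete your proof you would need to supply an equivalent explicit computation; the appeal to the classical $r$-matrix theorem is not by itself a proof in this setting.
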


\begin{proof}
  We need to check that $\pi$ is compatible with the group
  multiplication and satisfies the Jacobi identity.
  \begin{enumerate}
  \item Using Proposition~5.7 in \cite{tumpach-bruhat}, the
    compatibility with the group multiplication is equivalent to the
    fact that $\Pi_r$ satisfies the following cocycle condition:
    \begin{equation}\label{cocy}
      \Pi_r(gu) = (\Ad_g^*)^{*}\Pi_r(u) + \Pi_r(g),
    \end{equation}
    where $(\Ad_g^*)^*$ denotes the natural action of
    $g \in U(\mathcal{H})$ on $\Lambda^2\mathbb{F}^*_e$ given
    explicitly by
    \begin{equation*}
      (\Ad_g^*)^*\Pi_r(u)\left([x_1]_{\mathfrak{u}_1},
        [x_2]_{\mathfrak{u}_1}\right) =
      \Pi_r(u)\left(\Ad^*_g[x_1]_{\mathfrak{u}_1},
        \Ad^*_g[x_2]_{\mathfrak{u}_1}\right).
    \end{equation*}
    In order to check that condition, we use \eqref{coad_l/u} and
    \eqref{Pig}:
    \begin{multline*}
      (\Ad_g^*)^*\Pi_r(u)\left([x_1]_{\mathfrak{u}_1}, [x_2]_{\mathfrak{u}_1}\right)
      = \Pi_r(u) \left([g^{-1} \, x_1\,g]_{\mathfrak{u}_1}, [g^{-1}x_2 \, g] _{\mathfrak{u}_1} \right)
      \\=\Im \Tr \big(u^{-1} p_{\mathfrak{b}_2^+}(g^{-1} x_1 \,g)\,u\; p_{\mathfrak{u}_2}(u^{-1} p_{\mathfrak{b}_2^+}(g^{-1} x_2\, g) u)\big).
    \end{multline*}
    Using \eqref{b}, this expression can be further written as
    \begin{equation*}
      \Im \Tr \big(u^{-1} p_{\mathfrak{b}_2^+}(g^{-1} p_{\mathfrak{b}_2^+}(x_1) \,g) \, u \; p_{\mathfrak{u}_2}(u^{-1} p_{\mathfrak{b}_2^+}(g^{-1} p_{\mathfrak{b}_2^+}(x_2) \,  g) u)\big).
    \end{equation*}
    Using the decomposition
    \begin{equation}\label{x1}
      p_{\mathfrak{b}_2^+}(g^{-1} p_{\mathfrak{b}_2^+}(x_1) \,g) = g^{-1} p_{\mathfrak{b}_2^+}(x_1) \,g - p_{\mathfrak{u}_2}(g^{-1} p_{\mathfrak{b}_2^+}(x_1) \,g)
    \end{equation}
    and the fact that $\mathfrak{u}_2(\mathcal{H})$ is isotropic for
    the imaginary part of the trace, one has
    \begin{multline*}
      (\Ad_g^*)^*\Pi_r(u)\left([x_1]_{\mathfrak{u}_1}, [x_2]_{\mathfrak{u}_1}\right)
      \\ = \Im \Tr \big(u^{-1} g^{-1} p_{\mathfrak{b}_2^+}(x_1) \,g \,u \; p_{\mathfrak{u}_2}(u^{-1} p_{\mathfrak{b}_2^+}(g^{-1} p_{\mathfrak{b}_2^+}(x_2)\, g) u)\big).
    \end{multline*}
    Using equation~\eqref{x1} for $x_2$, one finally gets
    \begin{multline*}
      (\Ad_g^*)^*\Pi_r(u)\left([x_1]_{\mathfrak{u}_1}, [x_2]_{\mathfrak{u}_1}\right)
      \\ = \Im \Tr \big(u^{-1} g^{-1} p_{\mathfrak{b}_2^+}(x_1) \,g \,u \; p_{\mathfrak{u}_2}(u^{-1} g^{-1} p_{\mathfrak{b}_2^+}(x_2)\, g u)\big)
      \\- \Im \Tr \big(u^{-1} g^{-1} p_{\mathfrak{b}_2^+}(x_1) \,g \,u \; p_{\mathfrak{u}_2}(u^{-1} p_{\mathfrak{u}_2}(g^{-1} p_{\mathfrak{b}_2^+}(x_2)\, g) u)\big),
    \end{multline*}
    which, after simplification by $u$ in the last term, gives
    identity~\eqref{cocy}.
  \item Note that by construction the sharp map
    $\sharp: \mathbb{F} \rightarrow \mathbb{F}^*$,
    $\alpha \mapsto \pi(\alpha, \cdot)$ takes values in the tangent
    space of $U(\mathcal{H})$. Therefore, in order to check that $\pi$
    satisfies the Jacobi identity, we can use formula~(5.5) from
    Lemma~5.8 in \cite{tumpach-bruhat}. We will show that
    \begin{gather} \label{pigr} T_g\Pi_r(R_g
      \iota_{[x_3]_{\mathfrak{u}_1}}\Pi_r(g))([x_1]_{\mathfrak{u}_1},
      [x_2]_{\mathfrak{u}_1})
      = -\Im \Tr p_{\mathfrak{u}_2}(C) \; [p_{\mathfrak{b}_2^+}(A), p_{\mathfrak{b}_2^+}(B)]\\
      \label{lr1}
      \langle x_1, [\iota_{[x_3]}\Pi_r(g), \iota_{[x_2]}\Pi_r(g)]\rangle %
      = -\Im \Tr p_{\mathfrak{u}_2}(C) [p_{\mathfrak{b}_2^+}(A), p_{\mathfrak{u}_2}(B)],
    \end{gather}
    where $A = g^{-1} p_{\mathfrak{b}_2^+}(x_1) g$,
    $B = g^{-1}\,p_{\mathfrak{b}_2^+}(x_2)\, g$ and
    $C = g^{-1}\,p_{\mathfrak{b}_2^+}(x_3)\, g)$. Jacobi identity will
    then follow by adding the terms obtained by circular permutations
    of equations~\eqref{pigr} and \eqref{lr1}, and remarking that
    \begin{gather*}
      -\Im \Tr p_{\mathfrak{u}_2}(C) \; [p_{\mathfrak{b}_2^+}(A), p_{\mathfrak{b}_2^+}(B)] -\Im \Tr p_{\mathfrak{u}_2}(C) [p_{\mathfrak{b}_2^+}(A), p_{\mathfrak{u}_2}(B)]\\
      -\Im \Tr p_{\mathfrak{u}_2}(A) \; [p_{\mathfrak{b}_2^+}(B), p_{\mathfrak{b}_2^+}(C)] -\Im \Tr p_{\mathfrak{u}_2}(A) [p_{\mathfrak{b}_2^+}(B), p_{\mathfrak{u}_2}(C)]\\
      -\Im \Tr p_{\mathfrak{u}_2}(B) \; [p_{\mathfrak{b}_2^+}(C), p_{\mathfrak{b}_2^+}(A)] -\Im \Tr p_{\mathfrak{u}_2}(B) [p_{\mathfrak{b}_2^+}(C), p_{\mathfrak{u}_2}(A)]\\
      = -\Im \Tr p_{\mathfrak{u}_2}(C) \; [A, B] - \Im \Tr p_{\mathfrak{b}_2^+}(C)\; [A, B]
      = -\Im \Tr C\; [A, B] \\ = -\Im \Tr g^{-1}\,p_{\mathfrak{b}_2^+}(x_3)\, g\; [g^{-1} p_{\mathfrak{b}_2^+}(x_1) g, g^{-1}\,p_{\mathfrak{b}_2^+}(x_2)\, g]
      \\= -\Im\Tr p_{\mathfrak{b}_2^+}(x_3) [p_{\mathfrak{b}_2^+}(x_1), p_{\mathfrak{b}_2^+}(x_2)] = 0,
    \end{gather*}
    where the last equality follows from the fact that
    $\mathfrak{b}_2^+(\mathcal{H})$ is an isotropic subalgebra. In
    order to prove equations~\eqref{pigr} and \eqref{lr1}, one needs
    three ingredients:
    \begin{enumerate}
    \item The differentiation of the cocycle identity~\eqref{cocy}
      with respect to $u$ leads to the following identity
      \begin{equation}\label{a}
        T_g\Pi_r(L_{g}X)([x_1]_{\mathfrak{u}_1}, [x_2]_{\mathfrak{u}_1}) = T_e\Pi_r(X)(\Ad^*_g[x_1]_{\mathfrak{u}_1}, \Ad^*_g[x_2]_{\mathfrak{u}_1}),
      \end{equation}
      where $X\in\mathfrak{u}(\mathcal{H})$ and $g\in U(\mathcal{H})$.
    \item The differentiation of equation~\eqref{Pig} with respect to
      $g\in U(\mathcal{H})$ gives
      \begin{equation}\label{b2}
        T_e\Pi_r(Y)([x_1]_{\mathfrak{u}_1}, [x_2]_{\mathfrak{u}_1}) = \Im\Tr Y[p_{\mathfrak{b}_2^+}(x_1), p_{\mathfrak{b}_2^+}(x_2)].
      \end{equation}
    \item By equation~\eqref{Pig}, the interior product of $\Pi_r(g)$
      with $[x_3]_{\mathfrak{u}_1}$ is %
      \begin{equation}\label{c}
        \iota_{[x_3]_{\mathfrak{u}_1}}\Pi_r(g) =- g\,p_{\mathfrak{u}_2}(g^{-1}\,p_{\mathfrak{b}_2^+}(x_3)\, g)\,g^{-1} \in \mathfrak{u}(\mathcal{H}).
      \end{equation}
    \end{enumerate}
    From equation~\eqref{a}, it follows that
    \begin{align*}
      T_g\Pi_r(R_{g}X)([x_1]_{\mathfrak{u}_1}, [x_2]_{\mathfrak{u}_1}) &= T_g\Pi_r(L_{g}\Ad_{g^{-1}}(X))([x_1]_{\mathfrak{u}_1}, [x_2]_{\mathfrak{u}_1}) \\ & =T_e\Pi_r(\Ad_{g^{-1}}(X))(\Ad^*_g[x_1]_{\mathfrak{u}_1}, \Ad^*_g[x_2]_{\mathfrak{u}_1})\\& =
      T_e\Pi_r(\Ad_{g^{-1}}(X))([g^{-1}\,x_1\,g]_{\mathfrak{u}_1}, [g^{-1}\,x_2\,g]_{\mathfrak{u}_1})
    \end{align*}
    Using equation~\eqref{b2}, this simplifies to
    \begin{equation}\label{Rgpi}
      T_g\Pi_r(R_gX)([x_1]_{\mathfrak{u}_1}, [x_2]_{\mathfrak{u}_1}) = \Im \Tr g^{-1}\,X\, g[p_{\mathfrak{b}_2^+}(g^{-1}\,x_1\,g), p_{\mathfrak{b}_2^+}(g^{-1}\,x_2\,g)].
    \end{equation}
    Hence for $X =\iota_{[x_3]_{\mathfrak{u}_1}}\Pi_r(g)$, using
    equation~\eqref{c} one gets
    \begin{multline}\label{Teq}
      T_g\Pi_r(R_g \iota_{[x_3]_{\mathfrak{u}_1}}\Pi_r(g))([x_1]_{\mathfrak{u}_1}, [x_2]_{\mathfrak{u}_1})\\ =
      -\Im \Tr \,p_{\mathfrak{u}_2}(g^{-1}\,p_{\mathfrak{b}_2^+}(x_3)\, g)[p_{\mathfrak{b}_2^+}(g^{-1}\,x_1\,g), p_{\mathfrak{b}_2^+}(g^{-1}\,x_2\,g)]
    \end{multline}
    By equation~\eqref{b} and by the isotropy of $\mathfrak{b}_2^+$,
    one gets
    \begin{multline}
      T_g\Pi_r(R_g \iota_{[x_3]_{\mathfrak{u}_1}}\Pi_r(g))([x_1]_{\mathfrak{u}_1}, [x_2]_{\mathfrak{u}_1}) \\
      = -\Im \Tr g^{-1}\,p_{\mathfrak{b}_2^+}(x_3)\, g \; [p_{\mathfrak{b}_2^+}(g^{-1}\,p_{\mathfrak{b}_2^+}(x_1)\,g), p_{\mathfrak{b}_2^+}(g^{-1}\,p_{\mathfrak{b}_2^+}(x_2)\,g)],
      \\
      = -\Im \Tr p_{\mathfrak{u}_2}(g^{-1}\,p_{\mathfrak{b}_2^+}(x_3)\, g) \; [p_{\mathfrak{b}_2^+}(g^{-1}\,p_{\mathfrak{b}_2^+}(x_1)\,g), p_{\mathfrak{b}_2^+}(g^{-1}\,p_{\mathfrak{b}_2^+}(x_2)\,g)],
    \end{multline}
    which is equation~\eqref{pigr}. On the other hand, by
    equation~\eqref{c} one gets
    \begin{multline*}
      [\iota_{[x_3]}\Pi_r(g), \iota_{[x_2]}\Pi_r(g)]\\ = [- g\,p_{\mathfrak{u}_2}(g^{-1}\,p_{\mathfrak{b}_2^+}(x_3)\, g)\,g^{-1} , - g\,p_{\mathfrak{u}_2}(g^{-1}\,p_{\mathfrak{b}_2^+}(x_2)\, g)\,g^{-1} ]
      \\ = g [\,p_{\mathfrak{u}_2}(g^{-1}\,p_{\mathfrak{b}_2^+}(x_3)\, g) , \,p_{\mathfrak{u}_2}(g^{-1}\,p_{\mathfrak{b}_2^+}(x_2)\, g)]\,g^{-1}.
    \end{multline*}
    Hence
    \begin{multline}\label{lr}
      \langle x_1, [\iota_{[x_3]}\Pi_r(g), \iota_{[x_2]}\Pi_r(g)]\rangle \\ = \Im \Tr x_1 g [\,p_{\mathfrak{u}_2}(g^{-1}\,p_{\mathfrak{b}_2^+}(x_3)\, g), \,p_{\mathfrak{u}_2}(g^{-1}\,p_{\mathfrak{b}_2^+}(x_2)\, g)]\,g^{-1} \\
      = \Im \Tr g^{-1} p_{\mathfrak{b}_2^+}(x_1) g [p_{\mathfrak{u}_2}(g^{-1}\,p_{\mathfrak{b}_2^+}(x_3)\, g), p_{\mathfrak{u}_2}(g^{-1}\,p_{\mathfrak{b}_2^+}(x_2)\, g)]\\
      = -\Im \Tr p_{\mathfrak{b}_2^+}(g^{-1} p_{\mathfrak{b}_2^+}(x_1) g) [p_{\mathfrak{u}_2}(g^{-1}\,p_{\mathfrak{b}_2^+}(x_2)\, g), p_{\mathfrak{u}_2}(g^{-1}\,p_{\mathfrak{b}_2^+}(x_3)\, g)].
    \end{multline}
    By the compatibility of the bracket of operators with the
    trace\linebreak ($\Tr A[ B, C] = \Tr C [A, B]$), this can be
    rewritten as
    \begin{multline}
      \langle x_1, [\iota_{[x_3]}\Pi_r(g), \iota_{[x_2]}\Pi_r(g)]\rangle \\
      = -\Im \Tr p_{\mathfrak{u}_2}(g^{-1}\,p_{\mathfrak{b}_2^+}(x_3)\, g) [p_{\mathfrak{b}_2^+}(g^{-1} p_{\mathfrak{b}_2^+}(x_1) g) ,p_{\mathfrak{u}_2}(g^{-1}\,p_{\mathfrak{b}_2^+}(x_2)\, g)],
    \end{multline}
    which proves equation~\eqref{lr1}.
  \end{enumerate}
\end{proof}

\begin{remark}
  The Lie bracket on
  $L^1(\mathcal{H})/\mathfrak{u}_1(\mathcal{H}) =
  \mathfrak{u}_*(\mathcal{H})$ that the Poisson--Lie group structure
  of $U(\mathcal{H})$ induces by Theorem~(5.11) in
  \cite{tumpach-bruhat} is given by
  \begin{equation}\label{bracketb}
    ([x_1]_{\mathfrak{u}_{1}}, [x_2]_{\mathfrak{u}_{1}}) \mapsto [p_{\mathfrak{b}_2^+}(x_1), p_{\mathfrak{b}_2^+}(x_2)],
  \end{equation}
  which is well defined on
  $\operatorname{L}_{1}(\mathcal{H})/\mathfrak{u}_{1}(\mathcal{H})$
  since
  $[p_{\mathfrak{b}_2^+}(x_1), p_{\mathfrak{b}_2^+}(x_2)]\in
  \operatorname{L}_1(\mathcal{H})$ for any
  $x_1, x_2\in \operatorname{L}_{1}(\mathcal{H})$. Note that this
  bracket is continuous and extends the natural bracket of
  $\mathfrak{b}_{1}^+(\mathcal{H})$. To our knowledge it is an open
  question whether this Banach Lie algebra structure integrates to a
  Banach Lie group.
\end{remark}

\end{document}